\documentclass[11pt]{amsart}

\usepackage{epsfig,overpic,comment}
\usepackage[usenames,dvipsnames,svgnames,table]{xcolor}
\usepackage[hyphens]{url}
\usepackage[pagebackref,linktocpage=true,colorlinks=true,linkcolor=Blue,citecolor=BrickRed,urlcolor=RoyalBlue]{hyperref}
\usepackage[msc-links,abbrev]{amsrefs}
\usepackage{amsmath,amsthm,amssymb,marginnote}
\usepackage{enumerate}
\usepackage[T1]{fontenc}

\textwidth=5.5in
\textheight=8in
\oddsidemargin=0.5in
\evensidemargin=0.5in
\topmargin=.5in

\newtheorem{theorem}{Theorem}[section]

\newtheorem{lemma}[theorem]{Lemma}

\theoremstyle{definition}

\newcommand{\be}{\begin{equation}}
\newcommand{\ee}{\end{equation}}
\newcommand{\ol}{\overline}

\newcommand{\R}{\mathbf{R}}

\newcommand{\C}{\mathcal{C}}
\newcommand{\G}{\mathcal{G}}

\renewcommand{\epsilon}{\varepsilon}
\renewcommand{\S}{\mathbf{S}}
\renewcommand{\tilde}{\widetilde}

\title[Total curvature of convex hypersurfaces]{Total curvature of convex hypersurfaces in Cartan-Hadamard manifolds}

\author{Mohammad Ghomi}
\address{School of Mathematics, Georgia Institute of Technology,
Atlanta, GA 30332}
\email{ghomi@math.gatech.edu}
\urladdr{www.math.gatech.edu/~ghomi}

\author{John Ioannis Stavroulakis}
\address{School of Mathematics, Georgia Institute of Technology,
Atlanta, GA 30332}
\email{jstavroulakis3@gatech.edu}

\date{\today \,(Last Typeset)}
\subjclass[2010]{Primary: 53C20, 53C42; Secondary: 53C45, 53C65.}
\keywords{Cartan-Hadamard conjecture,  isoperimetric inequality, total Gauss-Kronecker curvature, distance function of convex sets, nonpositive curvature.}
\thanks{The first-named author was supported by NSF grant DMS-2202337.}

\begin{document}

\maketitle

\begin{abstract}
We show that if the curvature of a Cartan-Hadamard $n$-manifold is constant near a convex hypersurface $\Gamma$, then the total Gauss-Kronecker curvature $\mathcal{G}(\Gamma)$ is not less than that of any convex hypersurface nested inside $\Gamma$. 
This extends  Borb\'{e}ly's monotonicity theorem in hyperbolic space.
It follows that $\mathcal{G}(\Gamma)$ is bounded below by the volume of the unit sphere in Euclidean space $\R^n$. 
\end{abstract}
\section{Introduction}

A Cartan-Hadamard manifold $M^n$ is a complete simply connected space with nonpositive curvature. A \emph{convex hypersurface} $\Gamma\subset M$ is the boundary of a compact convex set with interior points. 
An outstanding question in Riemannian geometry  \cite[p. 66]{ballmann-gromov-schroeder}\cite{willmore-saleemi} is whether the total Gauss-Kronecker curvature 
\begin{equation}\label{eq:main}
\mathcal{G}(\Gamma)\geq|\S^{n-1}|,
\end{equation}
 where $|\S^{n-1}|$ is the volume of the unit sphere in Euclidean space $\R^n$. 
 Establishing this inequality would resolve the Cartan-Hadamard conjecture \cite{kleiner1992, ghomi-spruck2022} concerning the extension of the Euclidean isoperimetric inequality to manifolds of nonpositive curvature \cite{aubin1975,gromov1999,burago-zalgaller1988}.
It is known that \eqref{eq:main} holds for geodesic spheres \cite{ghomi-spruck2022}, and Borb\'{e}ly \cite{borbely2002} showed that it holds in hyperbolic space $\textbf{H}^n$. More generally, he proved that if $\Gamma$, $\gamma$ are convex hypersurfaces in $\textbf{H}^n$  with $\gamma$ nested inside $\Gamma$, then $\mathcal{G}(\Gamma)\geq\mathcal{G}(\gamma)$. We refine this monotonicity result as follows:

\begin{theorem}\label{thm:main}
Let $\Gamma$, $\gamma$ be convex hypersurfaces in a Cartan-Hadamard manifold $M^n$, with $\gamma$ nested inside $\Gamma$. Suppose that the curvature $K$ of $M$ is constant on a neighborhood of $\Gamma$.  Then
$
\mathcal{G}(\Gamma)\geq\mathcal{G}(\gamma).
$
If $n=3$, then it suffices to assume that $K$ is constant on $\Gamma$.
\end{theorem}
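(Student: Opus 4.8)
The plan is to deform $\gamma$ into $\Gamma$ through a canonical family of convex hypersurfaces along which $\G$ is monotone. Write $C$ and $D\subseteq C$ for the compact convex bodies bounded by $\Gamma$ and $\gamma$, and for $t\ge 0$ set
\[
E_t:=D^{+t}\cap C,\qquad D^{+t}:=\{x\in M: d(x,D)\le t\}.
\]
Each $E_t$ is convex, $E_0=D$, and $E_t=C$ for all large $t$; its boundary
\[
\partial E_t=\big(\partial D^{+t}\cap C\big)\cup\big(\Gamma\cap D^{+t}\big)
\]
is a convex hypersurface glued from a piece $F_t$ of the outer parallel hypersurface of $\gamma$ and a piece of $\Gamma$, meeting along the ridge $R_t:=\partial D^{+t}\cap\Gamma\subseteq\Gamma$. ($\G$ is defined for boundaries of convex bodies via the generalized Gauss map, as the mass of a nonnegative curvature measure, which may partly concentrate on ridges.) Since $\G(\Gamma)=\G(\partial E_t)$ once $E_t=C$, it suffices to show $t\mapsto\G(\partial E_t)$ is nondecreasing, for then $\G(\Gamma)\ge\G(\partial E_0)=\G(\gamma)$. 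I would first reduce, by a standard approximation keeping the approximating surfaces inside the region where $K$ is constant, to the case that $\gamma,\Gamma$ are smooth and strictly convex with $\gamma\subset\inte C$.

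On $F_t$ the surface evolves by the outward unit normal flow of $\gamma$. Along a normal geodesic $c$, writing $S_t$ for the shape operator and $J_t$ for the Jacobian of the flow, one has the Riccati equation $S_t'=-S_t^2-\mathcal R_t$ together with $J_t'=(\operatorname{tr}S_t)J_t$, where $\mathcal R_t=R(\cdot,\dot c)\dot c$ acts on the normal space. Hence
\[
\frac{d}{dt}\big(\det S_t\cdot J_t\big)=-\,J_t\operatorname{tr}\!\big(\operatorname{adj}(S_t)\,\mathcal R_t\big)\ \ge\ 0,
\]
since $\mathcal R_t\le 0$ (nonpositive curvature), $\operatorname{adj}(S_t)\ge 0$ (convexity gives $S_t\ge 0$), and $J_t>0$ (no outward focal points in a Cartan-Hadamard manifold). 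As $\det S_t\cdot J_t$ is the density, referred to $\gamma$, of the curvature measure of $F_t$, the contribution of $F_t$ to $\G(\partial E_t)$ would be nondecreasing were no mass removed where $F_t$ exits $C$. This step uses only $K\le 0$.

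The hard part will be to balance the curvature mass deleted from $F_t$ as $\partial D^{+t}$ crosses out of $C$ against the mass gained by $\Gamma\cap D^{+t}$ and carried by the ridge $R_t$, as $t$ increases. Here the hypothesis enters decisively: because $K\equiv\kappa$ on a neighborhood $U$ of $\Gamma$, the metric near $\Gamma$ is that of the model space $M_\kappa$, so along $U$ the shape operators of $F_t$ and of $\Gamma$, the Jacobian of the normal flow, and the exterior dihedral angle along $R_t$ are all governed by the explicit Jacobi fields of $M_\kappa$, and $F_t$ is a parallel hypersurface of $M_\kappa$ with $E_t\subseteq C$. A computation should then give
\[
\frac{d}{dt}\G(\partial E_t)=\frac{d}{dt}\Big[\G(F_t)+\G(\Gamma\cap D^{+t})+\G_{\mathrm{ridge}}(R_t)\Big]\ \ge\ 0.
\]
The obstacle is that this is not a pointwise inequality: at a crossing point the curvature density of $F_t$ may be strictly smaller than that of $\Gamma$, so one must integrate the transfer identity over $\Gamma\cap D^{+t}$ and use the global constraint $\II\ge 0$ (equivalently $E_t\subseteq C$). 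Granting this, the theorem follows from the first two paragraphs.

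For $n=3$, the Gauss equation $\det\II_\Sigma=K_\Sigma-K$ and the fact that $\int_\Sigma K_\Sigma\,\dvol=4\pi$ for any closed convex surface $\Sigma\subset M^3$ (a topological sphere) give
\[
\G(\Sigma)=4\pi-\int_\Sigma K\,\dvol ,
\]
so $\G(\Sigma)$ depends on the ambient curvature only through its restriction to $\Sigma$; in particular $\G(\Gamma)=4\pi-\kappa\operatorname{Area}(\Gamma)$. I would recast the monotonicity above in this Gauss-Bonnet form, so that the curvature of $M$ enters only through such surface integrals; one can then arrange that only the values of $K$ along $\Gamma$ are used, whence $K|_\Gamma\equiv\kappa$ suffices.
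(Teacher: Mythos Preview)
Your Riccati computation on the freely flowing part $F_t$ is correct and recovers the known monotonicity of $\G$ along outer parallel hypersurfaces in nonpositive curvature. The genuine gap is exactly where you flag it: the ridge transfer. You assert that ``a computation should then give $\frac{d}{dt}\G(\partial E_t)\ge 0$'' and then write ``Granting this, the theorem follows,'' but that computation \emph{is} the theorem. Even in a space form, the first variation of the singular curvature measure along $R_t$ involves the exterior dihedral angle, the speed of $R_t$ along $\Gamma$, and the principal curvatures of both $F_t$ and $\Gamma$ at the ridge; you give no identity or inequality linking these, and the remark that $E_t\subseteq C$ with $\II\ge 0$ is not by itself an argument. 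Your $n=3$ reduction has the same defect: the Gauss--Bonnet identity $\G(\Sigma)=4\pi-\int_\Sigma K$ is written for smooth closed $\Sigma$, whereas $\partial E_t$ has a ridge, and in any case the putative monotonicity of $t\mapsto\G(\partial E_t)$ still involves $K$ along the interior piece $F_t$, not only $K|_\Gamma$; the sentence ``one can then arrange that only the values of $K$ along $\Gamma$ are used'' is not an argument.

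The paper avoids the ridge issue altogether by choosing a $\C^{1,1}$ convex interpolant $u^\lambda:=\lambda\,d_D+d_\Omega^2$ whose level sets between $\gamma_\epsilon$ and $\Gamma_\epsilon^\lambda:=(u^\lambda)^{-1}(\epsilon^2)$ are genuine $\C^{1,1}$ convex hypersurfaces, so that a comparison formula expresses $\G(\Gamma_\epsilon^\lambda)-\G(\gamma_\epsilon)$ as a volume integral over the shell. Inside $\Omega$ one has $u^\lambda=\lambda d_D$ with $|\nabla u^\lambda|$ constant, so the potentially bad integrand vanishes there; outside $\Omega$ the mixed curvature terms $R_{ijin}$ vanish when $K$ is constant on a neighborhood of $\Gamma$, and one concludes by letting $\lambda\to 0$, then $\epsilon\to 0$, using Hausdorff continuity of $\G$. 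For $n=3$ the paper does \emph{not} invoke Gauss--Bonnet; the point is that in this dimension the factors $\widehat{GK}_{ij}$ are absent, and one combines $|R_{ijin}|\le C\,d_\Omega$ (from $K$ constant on $\Gamma$) with $|\nabla u^\lambda|\ge 2d_\Omega$ to bound the bad integrand uniformly on $\Omega_{2\epsilon}\setminus\Omega$, whence the error is $O(|\Omega_{2\epsilon}\setminus\Omega|)\to 0$. The moral is that a smooth interpolating function turns the boundary bookkeeping you are attempting at $R_t$ into a bulk integral that can be controlled termwise.
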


Letting $\gamma$ in the above theorem be a geodesic sphere yields \eqref{eq:main}. For $n=2$, the above theorem follows quickly from the Gauss-Bonnet theorem, without any assumptions on $K$. For $n\geq 3$, however,  it is essential that $K$ be constant on $\Gamma$ 
due to examples by Dekster \cite{dekster1981}. When $\Gamma$ is smooth, and the constant in Theorem \ref{thm:main} is the supremum of $K$ on the domain $\Omega$ bounded by $\Gamma$, then $K$ is constant on $\Omega$ \cite[Thm. 1.2]{ghomi2024b}, which reduces the above result to Borb\'{e}ly's theorem;  see also \cite{ghomi-spruck2023a,schroeder-strake1989a} for similar rigidity results, which extend  ``gap theorems'' of Greene-Wu \cite{greene-wu1982} and Gromov \cite[Sec. 3]{ballmann-gromov-schroeder}. Since the constant in Theorem \ref{thm:main} is arbitrary, not to mention that no regularity is assumed on $\Gamma$, we need to develop another approach.

We prove Theorem \ref{thm:main} via a comparison formula for total curvature of  nested hypersurfaces \cite{ghomi-spruck2022}. This formula expresses the difference 
$\G(\Gamma)-\G(\gamma)$ as an integral over the region between the hypersurfaces, involving components of the Riemann curvature tensor $R$ of $M$ and derivatives of a function $u$ whose level sets interpolate between $\gamma$ and $\Gamma$.
The key step is the choice of $u$, which is built from the distance functions of $\gamma$ and $\Gamma$.
Convexity of $u$ ensures that the principal curvatures of its level sets are nonnegative, which control the sign of the leading terms in the comparison formula. When $K$
is constant near $\Gamma$, the mixed  terms of $R$ vanish, yielding the desired monotonicity. In dimension three, a more delicate estimate shows that these mixed terms can still be controlled if $K$ is constant only along 
$\Gamma$.

\section{Preliminaries}
Here we gather four lemmas which we need to prove Theorem \ref{thm:main}. Throughout this work $M$ is a Cartan-Hadamard $n$-manifold with sectional curvature $K$ and Riemann curvature tensor $R$. 
\subsection{The comparison formula}
Let $\Gamma$ be a closed $\C^{1,1}$ hypersurface embedded in $M$. The Gauss-Kronecker curvature $GK$ of $\Gamma$ is the determinant of the second fundamental form of $\Gamma$ with respect to the outward normal.
The \emph{total curvature} of $\Gamma$  is given by
$$
\G(\Gamma):=\int_\Gamma GK,
$$
which is well-defined by Rademacher's theorem. 
A \emph{domain} $\Omega\subset M$ is an open set with compact closure $\ol\Omega$. Let $\Omega$ be the domain bounded by $\Gamma$,  and $\gamma$ be another closed embedded $\C^{1,1}$ hypersurface  which bounds a domain $D$ with $\ol D\subset\Omega$. Then we say that $\gamma$ is \emph{nested} inside $\Gamma$. Suppose there exists a $\C^{1,1}$ function $u$ on $\ol\Omega\setminus D$ with $\nabla u\neq 0$ on $\Omega\setminus \ol D$, which is constant on $\gamma$ and $\Gamma$. We assume that $u|_\gamma<u|_\Gamma$ so that $e_n:=\nabla u/|\nabla u|$ points outward along the level sets of $u$. Let $\kappa_i$ be principal curvatures of the level sets  with respect to $e_n$, and let  $e_1,\dots, e_{n-1}$ form an orthonormal set of the corresponding principal directions.  The \emph{comparison formula}, first proved in \cite{ghomi-spruck2022} and developed further in \cite{ghomi-spruck2023b, ghomi2024}, states that
$$
\G(\Gamma)-\G(\gamma)
=
-\int_{\Omega\setminus D}\sum_{1\leq i\leq n-1}\widehat{GK}_{i}R_{inin}+
\int_{\Omega\setminus D}
\sum_{1\leq i\neq j\leq n-1} \widehat{GK}_{ij} \,\frac{|\nabla u|_j}{|\nabla u|}R_{ijin},
$$
where $|\nabla u|_j:=\nabla_{e_j} |\nabla u|$, $R_{ijk\ell}=\langle R(e_i, e_j)e_k, e_\ell\rangle$ are components of the Riemann curvature of $M$, $\widehat{GK}_{i}$ denotes the product of all principal curvatures other than $\kappa_i$, and $\widehat{GK}_{ij}$ is the product without $\kappa_i$ and $\kappa_j$. Note that $R_{inin}\leq 0$ because these are sectional curvatures of $M$. Furthermore,
if $u$ is a \emph{convex function}, i.e., its composition with geodesics in $M$ is convex, then $\kappa_i\geq 0$. Thus the first integral in  the comparison formula is nonnegative, which immediately yields:

\begin{lemma}\label{lem:convexnested}
Let $\Gamma$, $\gamma$ be $\C^{1,1}$ convex hypersurfaces in $M$, with $\gamma$ nested inside $\Gamma$, and bounding domains $D$, $\Omega$ respectively. Then
\begin{equation*}
\G(\Gamma)-\G(\gamma)
\geq
\int_{\Omega\setminus D}
\sum_{1\leq i\neq j\leq n-1} \widehat{GK}_{ij} \,\frac{|\nabla u|_j}{|\nabla u|}R_{ijin}
\end{equation*}
for any $\C^{1,1}$ convex function $u$ on $\ol \Omega\setminus D$ with $|\nabla u|\neq 0$, which is constant on $\gamma$ and $\Gamma$ with $u|_\gamma<u|_\Gamma$.
\end{lemma}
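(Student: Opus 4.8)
The plan is to obtain the lemma as an immediate consequence of the comparison formula recalled above, by discarding one manifestly nonnegative term. Given convex hypersurfaces $\gamma\subset\Gamma$ bounding $D\subset\Omega$ and a $\C^{1,1}$ convex function $u$ on $\ol\Omega\setminus D$ with $|\nabla u|\neq 0$ and $u|_\gamma<u|_\Gamma$, the hypotheses of the comparison formula are in place: since $u$ is $\C^{1,1}$ with nonvanishing gradient, the implicit function theorem shows that its level sets are $\C^{1,1}$ hypersurfaces which foliate $\ol\Omega\setminus D$, include $\gamma$ and $\Gamma$ as the extreme leaves, and interpolate between them, while $e_n:=\nabla u/|\nabla u|$ is a Lipschitz unit normal field pointing in the direction of increasing $u$, i.e., outward relative to the nested family. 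So I would simply apply the comparison formula verbatim to this $u$.

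Next I would record the two sign facts that control the first integral. First, $R_{inin}=\langle R(e_i,e_n)e_i,e_n\rangle$ is the sectional curvature of the plane $\mathrm{span}\{e_i,e_n\}$, hence $\le 0$ because $M$ is a Cartan--Hadamard manifold. Second, as noted in the paragraph preceding the statement, convexity of $u$ forces the principal curvatures $\kappa_i$ of its level sets, taken with respect to the outward normal $e_n$, to satisfy $\kappa_i\geq 0$ almost everywhere: heuristically the sublevel sets are convex and $e_n$ is their outward normal, so their second fundamental forms are positive semidefinite wherever (by Rademacher's theorem) they are defined. Consequently $\widehat{GK}_i=\prod_{j\neq i}\kappa_j\geq 0$, so the integrand $-\sum_{i}\widehat{GK}_i R_{inin}$ is nonnegative a.e., and the comparison formula gives
\[
\G(\Gamma)-\G(\gamma)=-\int_{\Omega\setminus D}\sum_{1\leq i\leq n-1}\widehat{GK}_{i}R_{inin}+\int_{\Omega\setminus D}\sum_{1\leq i\neq j\leq n-1}\widehat{GK}_{ij}\,\frac{|\nabla u|_j}{|\nabla u|}R_{ijin}\ \geq\ \int_{\Omega\setminus D}\sum_{1\leq i\neq j\leq n-1}\widehat{GK}_{ij}\,\frac{|\nabla u|_j}{|\nabla u|}R_{ijin},
\]
which is exactly the asserted inequality.

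At the level of this lemma there is essentially no obstacle: it is a direct corollary of the comparison formula, which is the genuinely hard input and is quoted from \cite{ghomi-spruck2022, ghomi-spruck2023b, ghomi2024}. The only points needing a little care are the low-regularity bookkeeping — that a $\C^{1,1}$ convex function with nonvanishing gradient has $\C^{1,1}$ level sets whose principal curvatures exist and are nonnegative a.e. — and fixing the orientation so that $e_n$ is genuinely the outward normal of the sublevel sets, so that ``$u$ convex'' yields $\kappa_i\geq 0$ rather than the opposite sign. The substantive work of the paper is reserved for what comes after: constructing an admissible convex $u$ out of the distance functions of $\gamma$ and $\Gamma$, and then estimating the surviving mixed-curvature integral $\int_{\Omega\setminus D}\sum_{i\neq j}\widehat{GK}_{ij}\,\frac{|\nabla u|_j}{|\nabla u|}R_{ijin}$, which vanishes or is controlled precisely when $K$ is constant near (or, for $n=3$, along) $\Gamma$.
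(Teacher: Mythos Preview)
Your argument is correct and matches the paper's own: the lemma is presented there as an immediate consequence of the comparison formula, using exactly the two sign facts you identify ($R_{inin}\le 0$ since $M$ has nonpositive curvature, and $\kappa_i\ge 0$ since $u$ is convex) to conclude that the first integral is nonnegative and may be dropped.
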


\subsection{The distance function}

For any set $X\subset M$, the \emph{distance function} $d_X\colon M\to\R$ is defined by
$$
d_X(p):=\inf_{x\in X} \textup{dist}_M(p,x),
$$
where $\textup{dist}_M$ is the Riemannian distance in $M$. For basics of distance functions  see \cite[Sec. 2, 3]{ghomi-spruck2022} and references therein. In particular when $X$ is convex, $d_X$ is convex on $M$. Furthermore,
$d_X$ is locally $\C^{1,1}$ on  $M\setminus \ol X$ \cite[Prop. 2.7]{ghomi-spruck2022} and $|\nabla d_X|=1$. 
A function $f\colon M\to \R$ is locally $\C^{1,1}$ on $X\subset M$ if it is $\C^{1,1}$ in local charts covering $X$. When $f$ is $\C^1$, an equivalent condition is that $\nabla f$ be Lipschitz, i.e.,
$$
\big|\nabla f(p)-\mathcal{T}_{q\to p}\nabla f(q)\big|\leq C \textup{dist}_M(p,q),
$$
for all $p$, $q\in X$, where $\mathcal{T}_{q\to p}$ is parallel translation along the geodesic connecting $q$ to $p$ \cite{azagra-ferrera2015}. Throughout this work $C$ denotes a positive constant whose value may change from one occurrence  to the next. If $f$ is locally $\C^{1,1}$ on a compact set $X$, then we  say that $f$ is $\C^{1,1}$ on $X$. The following fact is known in $\R^n$, see \cite[Thm. 4.8 (5)\&(9)]{federer1959} or \cite[Thm. 6.3]{delfour-zolesio2011}. A set $X\subset M$ is \emph{convex} if it contains the geodesic connecting any pair of its points.

\begin{lemma}\label{lem:d2}
Let $X$ be a convex set in $M$. Then $d_X^2$ is locally $\C^{1,1}$ on $M$.
\end{lemma}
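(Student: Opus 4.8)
The plan is to prove that $d_X^2$ is locally $\C^{1,1}$ on all of $M$, including along the boundary $\partial X$ and in the interior of $X$, where $d_X$ itself is merely Lipschitz (and identically zero on $X$). Away from $\ol X$ the statement is immediate since $d_X$ is locally $\C^{1,1}$ there by \cite[Prop. 2.7]{ghomi-spruck2022} and $|\nabla d_X|=1$, so $d_X^2 = d_X\cdot d_X$ is a product of locally $\C^{1,1}$ functions. In the interior of $X$ the function $d_X^2\equiv 0$, so the only real work is in a neighborhood of a point $p\in\partial X$. There I would argue via the gradient-Lipschitz characterization recalled in the excerpt: since $d_X^2$ is convex (as $d_X$ is convex and nonnegative, and $t\mapsto t^2$ is convex increasing on $[0,\infty)$), it is locally Lipschitz and differentiable a.e.; it suffices to show $\nabla(d_X^2) = 2 d_X \nabla d_X$ extends to a Lipschitz vector field near $p$, with the convention that this quantity is $0$ on $X$ (where $d_X=0$).

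The key estimate is the following: for $q\notin \ol X$ near $p$, one has $d_X(q)\le \mathrm{dist}_M(q,q')$ for the nearest point $q'\in\partial X$, and $\nabla d_X(q)$ is the unit tangent at $q$ of the minimizing geodesic from $q'$ to $q$. I would then compare $\nabla(d_X^2)(q_1) = 2d_X(q_1)\nabla d_X(q_1)$ and $\nabla(d_X^2)(q_2)=2d_X(q_2)\nabla d_X(q_2)$ for two nearby points (after parallel transport to a common point), splitting into cases according to whether each $q_i$ lies inside or outside $\ol X$. When both lie outside, write the difference as $2d_X(q_1)\big(\nabla d_X(q_1)-\mathcal{T}\nabla d_X(q_2)\big) + 2\big(d_X(q_1)-d_X(q_2)\big)\mathcal{T}\nabla d_X(q_2)$; the second term is controlled by the $1$-Lipschitz bound on $d_X$, while for the first term the factor $d_X(q_1)$ is what absorbs the potential blow-up of $\nabla d_X$'s modulus of continuity near $\partial X$ — this is exactly the cancellation that makes the square $\C^{1,1}$ even though $d_X$ is not $\C^1$ up to the boundary. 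Quantitatively, using nonpositive curvature and comparison with Euclidean space (or directly a Rauch/Jacobi-field estimate), the angle between the minimizing geodesics from the respective foot points, weighted by the distances, gives $|d_X(q_1)\nabla d_X(q_1) - \mathcal{T} d_X(q_2)\nabla d_X(q_2)| \le C\,\mathrm{dist}_M(q_1,q_2)$ on a neighborhood of $p$. When one or both points lie in $X$, $d_X$ vanishes there and the bound is even easier, using only $|d_X(q_i)|\le \mathrm{dist}_M(q_1,q_2)$ for the point outside.

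An alternative, possibly cleaner, route is to invoke the identity $d_X^2 = (\mathrm{dist}_M(\cdot,\cdot))^2$ composed with the metric projection: writing $\pi_X\colon M\to X$ for the nearest-point projection onto the convex set $X$ (well-defined and $1$-Lipschitz in a Cartan–Hadamard manifold by convexity of the distance), one has $d_X^2(p) = \mathrm{dist}_M(p,\pi_X(p))^2$. Since $(p,x)\mapsto \mathrm{dist}_M(p,x)^2$ is smooth away from the diagonal and, crucially, smooth (indeed $\C^\infty$, as the Cartan–Hadamard exponential map is a diffeomorphism with no conjugate points) in a full neighborhood of the diagonal — here nonpositive curvature is used decisively — and $\pi_X$ is Lipschitz, the composition is locally $\C^{1,1}$; one then checks the first-variation formula gives $\nabla d_X^2 = 2 d_X\nabla d_X$ and matches the value $0$ on $X$. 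The main obstacle in either approach is handling the boundary $\partial X$ rigorously: reconciling the a.e.-defined classical gradient with the Lipschitz extension, and making the curvature comparison uniform on a neighborhood of $p$ (so that the constant $C$ is genuinely local). I expect the convexity of $X$ and the absence of conjugate points in $M$ to be exactly what rules out the pathologies that can occur for nonconvex $X$ or in positive curvature, and I would lean on the $\R^n$ case cited from Federer and Delfour–Zolésio as a template, upgrading each Euclidean estimate to a Riemannian one via Jacobi-field comparison.
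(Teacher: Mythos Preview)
Your second route is essentially what the paper does, but there is a genuine gap in how you sequence it. The inference ``$(p,x)\mapsto\textup{dist}_M(p,x)^2$ is smooth and $\pi_X$ is Lipschitz, therefore $p\mapsto\textup{dist}_M(p,\pi_X(p))^2$ is $\C^{1,1}$'' is false as stated: smooth composed with Lipschitz is only Lipschitz (take $F(p,x)=x$, $\pi(p)=|p|$). What makes the argument work is that the \emph{gradient} of $d_X^2$ --- not $d_X^2$ itself --- is such a composition. The paper first establishes the global identity $\nabla d_X^2(p)=-2\log_p(\ol p)$, where $\ol p=\pi_X(p)$ (this is exactly your first-variation formula, and it holds on all of $M$ since $\log_p(p)=0$ on $X$), and \emph{then} argues that $p\mapsto\log_p(\ol p)$ is Lipschitz. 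So the first-variation step must come before the regularity claim, not after it.

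For the Lipschitz estimate on $\nabla d_X^2$ the paper also avoids your product decomposition $2d_X(q_1)\big(\nabla d_X(q_1)-\mathcal{T}\nabla d_X(q_2)\big)+2\big(d_X(q_1)-d_X(q_2)\big)\mathcal{T}\nabla d_X(q_2)$, which forces you to quantify the blow-up of $\nabla d_X$ near $\partial X$. Instead it inserts the intermediate point $\log_p(\ol q)$ and splits
\[
\big|\log_p(\ol p)-\mathcal{T}_{q\to p}\log_q(\ol q)\big|\le \big|\log_p(\ol p)-\log_p(\ol q)\big|+\big|\log_p(\ol q)-\mathcal{T}_{q\to p}\log_q(\ol q)\big|.
\]
The first term is controlled because $\log_p$ is nonexpansive (this uses $K\le 0$) and $\pi_X$ is nonexpansive; the second because for any fixed $o$ the field $p\mapsto\log_p(o)=-\tfrac12\nabla\textup{dist}_M^2(p,o)$ is smooth on a Cartan--Hadamard manifold. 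This split is cleaner than your first route and supplies exactly the ``Rauch/Jacobi comparison'' step you left unspecified.
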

\begin{proof}
Since $d_X=d_{\ol X}$, we may assume that $X$ is closed. Also note that,
since $d_X$ is convex on $M$, and is $\C^1$ on $M\setminus \partial X$, the same holds for $d_X^2$. Let $\log_p\colon M\to T_p M$ be the inverse of the exponential map, $\pi_X\colon M\to X$ be the nearest point projection, and set $\ol p:=\pi_X(p)$. Then
$$
\nabla d_X(p)= -\frac{\log_p(\ol p)}{d_X(p)}
$$
for $p\in M\setminus X$ \cite[Lem. 2.2]{ghomi-spruck2022}.  Let $p_0\in\partial X$, and set $f(p):= \textup{dist}_M^2(p,p_0)$. Since $0\leq d_X^2(p)\leq f(p)$, and $f(p_0)=0$, it follows that $d_X^2$ is differentiable at $p_0$ with $|\nabla d^2_X(p_0)|=0$. Thus $\nabla d^2_X$ is continuous, and so $d_X^2$ is $\C^1$ on $M$.
Since 
$\nabla d_X^2(p)=2 d_X(p)\nabla d_X(p)=-2\log_p(\ol p)$, for $p\in M\setminus X$, and $|\nabla d_X^2|=0$ on $X$,
$$
\nabla d_X^2(p)=-2\log_p(\ol p),
$$
for all $p\in M$. By the triangle inequality,
\begin{multline*}
\left|\nabla d^2_X(p)-\mathcal{T}_{q\to p}\nabla d^2_X(q)\right|
=
2\left|\log_p(\ol p)-\mathcal{T}_{q\to p}\log_q(\ol q)\right|\\
\leq
2\left|\log_p(\ol p)-\log_p(\ol q)\right|
+
2 \left|\log_p(\ol q)-\mathcal{T}_{q\to p}\log_q(\ol q)\right|.
\end{multline*}
Since $K_M\leq 0$, $\log_p$ is nonexpansive. Furthermore, $\pi_X$ is nonexpansive as well \cite[Cor. 2.5]{bridson-haefliger1999}. Thus
$$
\left|\log_p(\ol p)-\log_p(\ol q)\right|\leq \textup{dist}_M(\ol p,\ol q)\leq \textup{dist}_M(p,q).
$$
It now suffices to show that
$\left|\log_p(\ol q)-\mathcal{T}_{q\to p}\log_q(\ol q)\right|\leq C\textup{dist}_M(p,q)$, for $p$, $q$ in any given compact set $Y\subset M$.
More generally, for any fixed point $o$ of $M$, and $p$, $q$ in $Y$ we claim that
$$
\left|\log_p(o)-\mathcal{T}_{q\to p}\log_q(o)\right|\leq  C \textup{dist}_M(p,q),
$$
that is, the vector field $p\mapsto\log_p(o)$ is Lipschitz on $Y$. This is indeed the case because
$$
\log_p(o)=-\frac{1}{2}\nabla\textup{dist}_M^2(p,o),
$$ 
and $\textup{dist}_M^2(p,o)$ is smooth, which completes the proof.
\end{proof}

\subsection{Mixed curvature terms}
The Riemann curvature tensor $R$ may be viewed as a symmetric bilinear form $\mathcal{R}$ on the space of 2-forms $\Lambda^2 TM$. More explicitly, let $e_i$ be an orthonormal basis for $T_p M$. Then $e_i\wedge e_j$, for $1\leq i<j\leq n$, form a basis for $\Lambda^2 T_pM$. There is a natural inner product on $\Lambda^2 T_pM$ given by $\langle e_i \wedge e_j,\, e_k \wedge e_\ell \rangle := \delta_{ik} \delta_{j\ell} - \delta_{i\ell} \delta_{jk}$. In particular, $e_i\wedge e_j$ are orthonormal.
We may then define $\mathcal{R}\colon \Lambda^2 T_pM\to \Lambda^2 T_pM$ by 
$$
\big\langle \mathcal{R}(e_i\wedge e_j), e_k\wedge e_\ell\big\rangle:= R_{ijk\ell}=\big\langle R(e_i, e_j)e_k,e_\ell\big\rangle.
$$
The \emph{mixed curvature terms} are the coefficients $R_{ijk\ell}$  when $\{i,j\}\neq \{k,\ell\}$, or the off-diagonal components of  $\mathcal{R}$. We say $K$  is constant on $X\subset M$, or $K=k$ on $X$, if for all $p\in X$ and planes $\Pi\subset T_pM$, $K(\Pi)=k$. 

\begin{lemma}\label{lem:mixed}
Let $X\subset M$ be a compact set. Suppose that $K$ is constant on $X$. Then there exists a neighborhood $U$ of $X$ such that for any orthonormal frame field on $U$, the absolute values of  the mixed curvature terms on $U$ are bounded above by $C d_X$.
\end{lemma}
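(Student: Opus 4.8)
The plan is to exploit the fact that, at points where the curvature is exactly constant, the curvature operator $\mathcal{R}$ is a scalar multiple of the identity, so the mixed terms vanish identically; the content of the lemma is then a quantitative first-order statement, saying that the mixed terms decay at least linearly as one approaches $X$. I would fix a point $p_0 \in X$ and work in a single smooth orthonormal frame field $\{E_1,\dots,E_n\}$ defined on a coordinate ball around $p_0$. Since $X$ is compact, finitely many such balls $U_1,\dots,U_N$ cover $X$; I would prove the bound on each $U_\alpha$ (shrinking if necessary to a smaller concentric ball whose closure lies in $U_\alpha$) and then take $U = \bigcup_\alpha U_\alpha'$, using a Lebesgue number argument to ensure every point of $U$ is within bounded distance of $X$ inside some $U_\alpha$. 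A general orthonormal frame on $U$ differs from $\{E_i\}$ by a pointwise rotation, i.e. an $\mathrm{SO}(n)$- (or $\mathrm{O}(n)$-) valued function; I would note that the components $R_{ijk\ell}$ in the new frame are obtained from those in the $\{E_i\}$-frame by a unitary change of basis on $\Lambda^2 T_pM$, and since the constant-curvature (scalar) part of $\mathcal{R}$ is invariant under such a change, the mixed (off-diagonal) part of $\mathcal{R}$ in any frame is controlled in norm by the operator norm of $\mathcal{R} - c\,\mathrm{Id}$ on $\Lambda^2 T_pM$. Hence it suffices to bound $\|\mathcal{R}_p - c\,\mathrm{Id}\|$, where $c = c(p_0)$ is the constant value of $K$ near $p_0$, by $C\,d_X(p)$.

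To get that bound, I would argue as follows. Define $\phi(p) := \mathcal{R}_p - c(p_0)\,\mathrm{Id}$, a smooth section of $\mathrm{End}(\Lambda^2 TM)$ over $U_\alpha$ (smoothness because $R$ is smooth and the frame $\{E_i\}$ is smooth). For every $q \in X \cap U_\alpha$ the curvature is constant equal to some value $k(q)$; but constancy of the sectional curvature at a single point forces $\mathcal{R}_q = k(q)\,\mathrm{Id}$ (this is the standard linear-algebra fact that a symmetric operator on $\Lambda^2$ all of whose ``plane'' quadratic values $\langle \mathcal{R}(e\wedge f), e\wedge f\rangle$ are equal must be scalar — it follows by polarization from evaluating on decomposable bivectors), and since $X$ is assumed to have constant curvature, $k$ is actually globally constant on $X$, equal to $c(p_0)$ after shrinking so that $U_\alpha$ meets only the relevant component. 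Therefore $\phi$ vanishes identically on $X \cap \overline{U_\alpha'}$. A smooth function vanishing on a closed set is Lipschitz-dominated by the distance to that set on any compact subset: explicitly, for $p \in U_\alpha'$ pick a nearest point $\bar p \in X$ (which lies in $U_\alpha$ once $U_\alpha'$ is small enough and $U$ is chosen via the Lebesgue number so that $d_X(p)$ is small), connect $p$ to $\bar p$ by the minimizing geodesic, and integrate $\frac{d}{dt}\phi$ along it, using that $\phi(\bar p)=0$ and that $|\nabla \phi|$ is bounded on the compact set $\overline{U_\alpha'}$; this yields $\|\phi(p)\| \le C\, \mathrm{dist}_M(p,\bar p) = C\, d_X(p)$. (One must express $\nabla\phi$ in terms of $\nabla R$ and the connection coefficients of the frame $\{E_i\}$, all of which are bounded on $\overline{U_\alpha'}$; this is the routine computation I would not grind through.)

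Finally, combining the two steps: for any orthonormal frame on $U$ and any point $p \in U$, each mixed term $R_{ijk\ell}(p)$ with $\{i,j\}\neq\{k,\ell\}$ is an off-diagonal entry of $\mathcal{R}_p$ in an orthonormal basis of $\Lambda^2 T_pM$, hence bounded in absolute value by $\|\mathcal{R}_p - c\,\mathrm{Id}\| \le C\,d_X(p)$, with $C$ the maximum of the finitely many constants $C_\alpha$ from the cover. The main obstacle — really the only nontrivial point — is the frame-independence in the first paragraph: one needs that passing to an arbitrary (merely continuous, or measurable) orthonormal frame does not destroy the decay estimate. This is handled cleanly by phrasing everything in terms of the operator $\mathcal{R}$ on $\Lambda^2 TM$ and the orthogonal-invariance of its distance to the scalar operators, rather than trying to track individual components frame-by-frame; once that viewpoint is adopted, the rest is the standard ``smooth function vanishing on a set is $O(\mathrm{dist})$ near that set'' estimate.
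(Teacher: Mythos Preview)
Your proposal is correct and follows essentially the same approach as the paper: both arguments reduce to showing that the curvature operator satisfies $\|\mathcal{R}_p - k\,\mathrm{Id}\| \le C\,d_X(p)$ via the mean-value theorem applied to a smooth tensor vanishing on $X$, and then use orthogonal invariance of $\mathcal{R} - k\,\mathrm{Id}$ to pass to arbitrary orthonormal frames. The only cosmetic difference is that the paper works with a single smooth frame on a neighborhood of $X$ (legitimate since a Cartan--Hadamard manifold is diffeomorphic to $\R^n$, hence parallelizable), whereas you set up a finite cover; also, the hypothesis ``$K$ is constant on $X$'' already means a single global constant $k$, so your aside about components and $c(p_0)$ is unnecessary.
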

\begin{proof}
Suppose that $K=k$ on $X$, and let $e_i$ be a smooth orthonormal frame field on a neighborhood $V$ of $X$. Let $\tilde{\mathcal{R}}$ be the matrix representation of $\mathcal{R}$ with respect to $e_i\wedge e_j$.
Then $\tilde{\mathcal{R}}=kI$ on $X$, where  $I$ is the identity matrix. Since $X$ is compact and $\tilde{\mathcal{R}}$ is smooth, it follows that
$$
|\tilde{\mathcal{R}}-kI|_\infty\leq C d_X,
$$
on a neighborhood $U\subset V$, where $|\cdot|_\infty$ is the supremum of the absolute values of the coefficients. More explicitly, the above inequality follows from applying the mean value theorem to  $\tilde{\mathcal{R}}-kI$ restricted to geodesic segments originating from points of $X$.
If $\tilde{\mathcal{R}}'$ is the matrix representation of $\mathcal{R}$ with respect to $e_i'\wedge e_j'$, for any other frame field $e_i'$, then $\tilde{\mathcal{R}}' =O^{T}\tilde{\mathcal{R}}O$ for an orthogonal matrix $O$ at each point.
Thus
$$
\big|\tilde{\mathcal{R}}'-kI\big|_\infty=\big|O^{T}(\tilde{\mathcal{R}}-kI)O\big|_\infty\leq C \big|\tilde{\mathcal{R}}-kI\big|_\infty,
$$
where $C$ depends only on $n$. So $|\tilde{\mathcal{R}}'-kI|_\infty\leq C d_X$, which completes the proof.
\end{proof}

\subsection{Continuity of total curvature}
For a general convex hypersurface $\Gamma\subset M$, the total curvature $\mathcal{G}(\Gamma)$ is defined as follows. Let $\Omega$ be the domain bounded by $\Gamma$. The \emph{outer parallel hypersurface} of $\Gamma$  at distance $t\geq 0$ is given by 
$
\Gamma_t:=d_\Omega^{-1}(t).
$
 For $t>0$, $\Gamma_t$ is $\C^{1,1}$ \cite[Lem. 2.6]{ghomi-spruck2022} and thus $\G(\Gamma_t)$ is well defined. We set 
$$
\G(\Gamma):=\lim_{t\searrow 0}\G(\Gamma_t).
$$
By the comparison formula, 
$t\mapsto \G(\Gamma_t)$ is nondecreasing \cite[Cor. 4.4]{ghomi-spruck2023b}. Furthermore, since $\Gamma_t$ is convex, $\G(\Gamma_t)\geq 0$. Thus $\G(\Gamma)$ is well-defined and finite. We record the following known fact \cite[Note 3.7]{ghomi-spruck2022}, which can be established via the theory of smooth valuations \cite{alesker2007},   and convergence of normal cycles \cite{fu1994,zahle1986}. See \cite{ghomi2025-continuity} for a more direct proof.

\begin{lemma}[\cite{ghomi2025-continuity}]\label{lem:continuity}
The total curvature functional $\mathcal{G}$ is continuous with respect to Hausdorff distance on the space of convex hypersurfaces $\Gamma\subset M$.
\end{lemma}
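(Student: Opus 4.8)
The plan is to reduce to sequences and then to replace the defining limit $\G(\Gamma)=\lim_{t\searrow 0}\G(\Gamma_t)$ by a rate of convergence that is \emph{uniform over Hausdorff-bounded families}, so that continuity propagates from the positive scales down to scale $0$. So suppose $\Gamma^j\to\Gamma$ in Hausdorff distance, with $\Gamma^j=\partial\Omega^j$ and $\Gamma=\partial\Omega$. Passing to a subsequence, I would fix a geodesic ball $B$ containing $\ol\Omega$, all $\ol{\Omega^j}$, and all of their outer parallel bodies out to distance $1$; since $\ol B$ is compact, $-a^2\le K\le 0$ on $\ol B$ for some $a\ge 0$, and by Hausdorff convergence one may also fix $r>0$ with an $r$-ball contained in every $\Omega^j$ and in $\Omega$ (so the family is uniformly fat). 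For each domain $\Omega'$ in this family, $t\mapsto\G(\Gamma'_t)$ is nonnegative, nondecreasing, and $\G(\Gamma')=\inf_{t>0}\G(\Gamma'_t)$.

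\emph{Step 1 — a uniform rate along parallels.} Apply the comparison formula on $\{s\le d_{\Omega'}\le t\}$, for $0<s<t\le 1$, to $\gamma=\Gamma'_s$, $\Gamma=\Gamma'_t$, and $u=d_{\Omega'}$. This $u$ is $\C^{1,1}$, convex, constant on the two level sets with $u|_{\Gamma'_s}<u|_{\Gamma'_t}$, and $|\nabla u|\equiv 1$, so the second, mixed-curvature integral vanishes and
\[
\G(\Gamma'_t)-\G(\Gamma'_s)=-\int_{\{s<d_{\Omega'}<t\}}\sum_{1\le i\le n-1}\widehat{GK}_i\,R_{inin}.
\]
The integrand is nonnegative ($\widehat{GK}_i\ge 0$ by convexity, $R_{inin}\le 0$), so letting $s\searrow 0$, using monotone convergence, $0\le -R_{inin}\le a^2$ on $B$, and the coarea formula,
\[
0\le\G(\Gamma'_t)-\G(\Gamma')\le a^2\int_0^t\Big(\int_{\Gamma'_\rho}\textstyle\sum_i\widehat{GK}_i\Big)\,d\rho .
\]
It then remains to bound $\int_{\Gamma'_\rho}\sum_i\widehat{GK}_i$ uniformly for $\rho\in(0,1]$ and over the family. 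Here I would use that the principal curvatures of the outer parallel $\Gamma'_\rho$ are bounded by a Riccati-comparison constant depending only on $a$ and $\rho$, and that $\Gamma'_\rho$ has area bounded in terms of $B$ by a volume-comparison estimate; a standard tube computation then expresses $\int_{\Gamma'_\rho}\sum_i\widehat{GK}_i$ through the lower-order curvature integrals of the $\C^{1,1}$ hypersurface $\Gamma'_1$, which are themselves bounded by a constant $C=C(r,a,\mathrm{diam}\,B,n)$. This gives $0\le\G(\Gamma'_t)-\G(\Gamma')\le Ct$ for all $t\in(0,1]$, uniformly in the family.

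\emph{Step 2 — continuity at a fixed positive scale.} Fix $t_0\in(0,1]$; I must show $\G(\Gamma^j_{t_0})\to\G(\Gamma_{t_0})$. Since a distance function is $1$-Lipschitz in the Hausdorff distance of its set, $\|d_{\Omega^j}-d_\Omega\|_\infty\le d_H(\ol{\Omega^j},\ol\Omega)\to 0$; as $d_{\Omega^j}$ and $d_\Omega$ are geodesically convex and $\C^1$ off the bodies, this upgrades to $\C^1$ convergence on compact subsets of $M\setminus\ol\Omega$, and the principal curvatures of $\Gamma^j_{t_0}$ are uniformly bounded. Thus $\Gamma^j_{t_0}\to\Gamma_{t_0}$ as $\C^{1,1}$ convex hypersurfaces with uniform curvature bounds, and the claim reduces to continuity of $\G$ on such hypersurfaces. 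This is the step I expect to be the main obstacle: one cannot simply pass to the limit in $\G=\int\det(\mathrm{II})$, since $\det$ is not continuous under weak-$*$ convergence of the (uniformly bounded, positive semidefinite) second fundamental forms. To close this gap I would either invoke the continuity of the top curvature measure of convex bodies — equivalently, the convergence of normal cycles under Hausdorff convergence \cite{fu1994,zahle1986,alesker2007} — or approximate $\Omega^j$ and $\Omega_{t_0}$ by smooth strictly convex bodies, for which $\C^2$ convergence and hence pointwise convergence of $\det(\mathrm{II})$ do hold, and then absorb the approximation error using the uniform rate of Step 1.

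\emph{Step 3 — conclusion.} Granting Step 2, fix $\e>0$ and choose $t_0\in(0,1]$ with $Ct_0<\e$. Applying Step 1 to $\Omega^j$ and to $\Omega$ gives $0\le\G(\Gamma^j_{t_0})-\G(\Gamma^j)\le Ct_0$ and $0\le\G(\Gamma_{t_0})-\G(\Gamma)\le Ct_0$, whence
\[
\big|\G(\Gamma^j)-\G(\Gamma)\big|\le Ct_0+\big|\G(\Gamma^j_{t_0})-\G(\Gamma_{t_0})\big|<\e+o(1)\qquad(j\to\infty).
\]
Therefore $\limsup_j|\G(\Gamma^j)-\G(\Gamma)|\le\e$, and letting $\e\searrow 0$ proves the lemma.
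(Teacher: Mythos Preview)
The paper does not prove this lemma in the text; it records it as a known fact, pointing to smooth valuations \cite{alesker2007} and convergence of normal cycles \cite{fu1994,zahle1986}, and to \cite{ghomi2025-continuity} for a direct argument. So there is no in-text proof to compare your attempt against.

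Your outline is structurally sensible, but Step~2 is where the actual content lies, and you do not resolve it. You correctly flag that one cannot pass to the limit in $\int\det(\mathrm{II})$ under mere weak-$*$ convergence of the second fundamental forms, and then offer two ways out. The first --- invoking continuity of the top curvature measure via convergence of normal cycles --- is not a reduction but the lemma itself: once you grant that machinery, Steps~1 and~3 are superfluous, since normal-cycle convergence already yields $\G(\Gamma^j)\to\G(\Gamma)$ directly for arbitrary convex hypersurfaces under Hausdorff convergence. The second --- approximating by smooth strictly convex bodies --- is not carried out, and it is unclear how ``the uniform rate of Step~1'' would absorb the approximation error: the natural approximants (further outer parallels) are themselves only $\C^{1,1}$, so you face the same weak-$*$ obstacle. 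In short, the scaffolding of Steps~1 and~3 is fine, but the load-bearing Step~2 is essentially the statement of the lemma restricted to a subclass that is not obviously easier, and you have not proved it.

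A smaller gap in Step~1: the uniform bound on $\int_{\Gamma'_\rho}\sum_i\widehat{GK}_i$ as $\rho\searrow 0$ is plausible (this is a generalized quermassintegral), but it is not as immediate as you suggest. The principal curvatures of $\Gamma'_\rho$ blow up like $1/\rho$ near nonsmooth points of $\Gamma'$, so pointwise curvature bounds depending on $\rho$ together with an area bound do not by themselves control the integral; one needs a tube-formula argument to see that the \emph{integrated} quantity stays bounded.
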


\section{Proof of Theorem \ref{thm:main}}
\subsection{The general case} 
Let $\Omega$, $D$ be the domains bounded by $\Gamma$, $\gamma$, and $d_\Omega$, $d_D$ be the corresponding distance functions respectively.
For $\lambda\in [0,\lambda_0]$, set
$$
u^\lambda:=\lambda d_D+d_\Omega^2.
$$
Recall that $d_D$ is locally $\C^{1,1}$ on $M\setminus\ol D$. Furthermore, $d_\Omega^2$ is locally $\C^{1,1}$ on $M$ by Lemma \ref{lem:d2}. Thus $u^\lambda$ is  locally $\C^{1,1}$ on $M\setminus\ol D$. Since $d_D$ and $d_\Omega$ are convex, so is $u^\lambda$. Fix $\epsilon>0$ so small that the outer parallel hypersurface $\gamma_\epsilon$ is nested inside $\Gamma$. 
Let $D_\epsilon$ and $\Omega_\epsilon$ be the domains bounded by $\gamma_\epsilon$ and $\Gamma_\epsilon$ respectively.
Set
$$
\Gamma_\epsilon^\lambda:=(u^{\lambda})^{-1}(\epsilon^2).
$$
 So $\Gamma_\epsilon^\lambda\to\Gamma_\epsilon$ as $\lambda\to 0$.
In particular, choosing $\lambda_0$ sufficiently small, we may assume that
 $\Gamma_\epsilon^\lambda$ lies in the annular region $\Omega_{2\epsilon}\setminus \Omega$. Hence if $\Omega_\epsilon^\lambda$ is the domain bounded by $\Gamma_\epsilon^\lambda$, then $\Omega\subset\Omega_\epsilon^\lambda\subset\Omega_{2\epsilon}$.
We may choose $\epsilon$ so small that $K$ is constant on $\Omega_{2\epsilon}\!\!\setminus\Omega$. Then the mixed curvature terms $R_{ijin}=0$ on $\Omega_{2\epsilon}\!\!\setminus\Omega$. Hence $\G(\Gamma_\epsilon^\lambda)-\G(\gamma_\epsilon)\geq 0$ by Lemma \ref{lem:convexnested}. Letting $\lambda\to 0$ followed by $\epsilon\to 0$ completes the proof by Lemma \ref{lem:continuity}.

\subsection{The case of $n=3$}
When $n=3$, by Lemma \ref{lem:convexnested} we have
$$
\G(\Gamma_\epsilon^\lambda)-\G(\gamma_\epsilon)
\geq
\int_{\Omega_\epsilon^\lambda\setminus D_\epsilon} F_\lambda,
\quad\quad
\text{where}
\quad\quad
F_\lambda
:=
\sum_{1\leq i,j\leq 2} \frac{|\nabla u^\lambda|_{j}}{|\nabla u^\lambda|}R_{ijin}.
$$
Since $\Omega_\epsilon^\lambda\setminus D_\epsilon=(\Omega_\epsilon^\lambda\setminus \Omega)\cup (\Omega\setminus D_\epsilon)$, $u^\lambda=\lambda d_D$ on $\Omega\setminus D_\epsilon$, and $|\nabla d_D|=1$, it follows that
$F_\lambda$ vanishes identically on $\Omega\setminus D_\epsilon$.
Thus 
$$
\G(\Gamma_\epsilon^\lambda)-\G(\gamma_\epsilon)
\geq
\int_{\Omega_\epsilon^\lambda\setminus \Omega} F_\lambda
\geq
-\left|\int_{\Omega_{\epsilon}^\lambda\setminus \Omega} F_\lambda\right|
\geq
-\int_{\Omega_{\epsilon}^\lambda\setminus \Omega} |F_\lambda|
\geq
-\int_{\Omega_{2\epsilon}\!\setminus\Omega} \left|F_\lambda\right|.
$$

Next we show that $|F_\lambda|$ is uniformly bounded above (almost everywhere) on $\Omega_{2\epsilon}\!\setminus\Omega$, by the following three estimates.
Since  $\nabla u^\lambda=\lambda \nabla d_D +\nabla d_\Omega^2$ is uniformly Lipschitz, 
$$
\big||\nabla u^\lambda|_j\big|\leq C,
$$
for $C$ independent of $\lambda$. By Lemma \ref{lem:mixed}, we also have
$$
|R_{ijin}|\leq Cd_\Omega,
$$
where again $C$ does not depend on $\lambda$.
Next note that
$
\langle\nabla d_\Omega,\nabla  d_D \rangle \geq 0,
$
because level sets of $d_\Omega$ are convex, and $\nabla  d_D$ is tangent to geodesic rays which originate in $\Omega$. Thus
$$
\big|\nabla u^{\lambda }\big| =\sqrt{4d_\Omega^{2}+\lambda ^{2}+4\lambda d_\Omega\langle\nabla d_\Omega,\nabla  d_D \rangle}\geq 2d_\Omega.
$$
So we conclude that $|F_\lambda|\leq C$ on $\Omega_{2\epsilon}\!\setminus\Omega$, which yields
$$
\G(\Gamma_\epsilon^\lambda)-\G(\gamma_\epsilon)
\geq
-C|\Omega_{2\epsilon}\!\setminus\Omega|.
$$
Again letting $\lambda\to 0$ followed by $\epsilon\to 0$ completes the proof by Lemma \ref{lem:continuity}.

\section*{Acknowledgment}
We thank Mario Santilli for comments on proving Lemma \ref{lem:d2}. Thanks also to Joel Spruck and Joe Hoisington for  useful communications.

\bibliography{references}

\end{document}